\documentclass[12pt]{article}

\usepackage{amsthm, amsmath, amssymb, amsfonts, url, booktabs, tikz, setspace, fancyhdr, bm}
\usepackage{fullpage}
\usepackage{hyperref, enumerate}
\usepackage[shortlabels]{enumitem}
\usepackage[babel]{microtype}
\usepackage[english]{babel}
\usepackage[capitalise]{cleveref}
\usepackage{comment}
\usepackage{bbm}
\usepackage{csquotes}
\usepackage{graphicx}
\usepackage{float}
\usepackage[dvipsnames]{xcolor}
\usepackage{soul}
\usepackage{mathtools}
\usepackage[normalem]{ulem}

\usetikzlibrary{
    positioning,
    arrows.meta,
    shapes.geometric,
    decorations.pathmorphing,
    decorations.pathreplacing,
    fit,
    backgrounds
}

\counterwithin{figure}{section}

\newtheorem{theorem}{Theorem}[section]

\newtheorem{conj}[theorem]{Conjecture}

\newtheorem{lemma}[theorem]{Lemma}

\newtheorem{claim}[theorem]{Claim}

\theoremstyle{definition}

\newtheorem*{defn-non}{Definition}

\newlist{Case}{enumerate}{2}
\setlist[Case,1]{%
    label={\bfseries Case \arabic*.},
    labelindent=1em,
    labelwidth=1.3cm,
    labelsep*=1em,
    leftmargin=!
}
\setlist[Case,2]{%
    label={\bfseries Subcase \arabic{Casei}.\arabic*.},
    labelindent=-1em,
    labelwidth=1.3cm,
    labelsep*=1em,
    leftmargin=!
}

\newcommand{\rhat}{\widehat R}

\crefname{theorem}{Theorem}{Theorems}
\Crefname{theorem}{Theorem}{Theorems}
\crefname{lemma}{Lemma}{Lemmas}
\Crefname{lemma}{Lemma}{Lemmas}

\hypersetup{
  pdftitle={The Multicolour Size--Ramsey Number of an Even Cycle},
  pdfsubject={Multicolour size--Ramsey numbers of even cycles},
  pdfkeywords={size--Ramsey number, multicolour Ramsey theory, even cycle, random graph, vertex expansion}
}

\title{The Multicolour Size--Ramsey Number of an Even Cycle}

\author{
Lanchao Wang\thanks{School of Mathematics, Nanjing University, Nanjing, China, and ECOPRO, Institute for Basic Science, Daejeon, Korea. Email: lanchaowang@foxmail.com.}
\and
Xiaolin Wang\thanks{School of Mathematics and Statistics, Fuzhou University, Fuzhou, China. Email: xiaolinw@fzu.edu.cn.}
}

\date{}

\begin{document}

\maketitle

\begin{abstract}
We determine the $k$-colour size--Ramsey number of even cycles up to
absolute constant factors. For every $k\ge2$ and every even
$n\ge100\log k$,
\[
\widehat R_k(C_n)=\Theta(k^2\log k)n.
\]
The lower bound follows from the corresponding result of Beke, Li and Sahasrabudhe for paths, while our upper bound improves the previous best estimate $O(k^{34}n)$ of Javadi, Kohayakawa and Miralaei.
\end{abstract}

% \noindent\textbf{Keywords.} Size--Ramsey number; Ramsey theory;
% even cycle; random graph; vertex expansion.

% \medskip
% \noindent\textbf{2020 Mathematics Subject Classification.} 05C55, 05C38,
% 05C80.
 
\section{Introduction}

The notion of size--Ramsey number was introduced by Erd\H{o}s, Faudree,
Rousseau and Schelp~\cite{EFRS} and has since been extensively studied.
For a graph $H$, its $k$-colour size--Ramsey number, denoted by
$\widehat R_k(H)$, is the minimum number of edges in a graph $G$ such that
every $k$-edge-colouring of $G$ contains a monochromatic copy of $H$.
Paths and cycles are among the most fundamental objects in size--Ramsey theory.

A landmark result of
Beck~\cite{Beck} states that $\widehat R_2(P_n)=\Theta(n)$, settling a
question of Erd\H{o}s. His argument also implies
$\widehat R_k(P_n)=\Theta_k(n)$. The dependence on the
number of colours was subsequently studied by Dudek and
Pra{\l}at~\cite{DP}, who established a quadratic lower bound, and by
Krivelevich~\cite{KrivelevichLongCycles}, who proved
$\widehat R_k(P_n)=O(k^2\log k)n$. A simpler proof with an improved explicit
constant was later given by Dudek and Pra{\l}at~\cite{DPnote}. Recently,
Beke, Li and Sahasrabudhe~\cite{BLS} closed the remaining logarithmic gap
by improving the lower bound, revealing the somewhat surprising asymptotic
order
$
\widehat R_k(P_n)=\Theta(k^2\log k)n
$
whenever $n\ge100\log k$.

For cycles, Haxell, Kohayakawa and {\L}uczak~\cite{HKL} proved
$\widehat R_k(C_n)=\Theta_k(n)$. Their argument relies
on the regularity lemma and therefore yields a tower-type dependence on $k$. Javadi, Khoeini, Omidi and
Pokrovskiy~\cite{JKOP} subsequently gave a regularity-free proof and
obtained the first bounds with non-tower-type dependence on $k$.

The dependence exhibits a striking difference between odd and even cycles. For odd cycles, Javadi and
Miralaei~\cite{JM} established an exponential lower bound in $k$, and
Brada\v{c}, Dragani\'c and Sudakov~\cite{BDS} subsequently proved a
matching upper bound up to a constant in the exponent. Thus, for
sufficiently long odd cycles,
$
\widehat R_k(C_n)=e^{\Theta(k)}n.
$

For even cycles, Javadi and Miralaei~\cite{JM} proved
$
\Omega(k^2n)
\le
\widehat R_k(C_n)
\le
O\bigl(k^{120}(\log k)^2n\bigr).
$
Brada\v{c}, Dragani\'c and Sudakov~\cite{BDS} subsequently obtained an
induced size--Ramsey bound which, in particular, improved the ordinary
upper bound to $O(k^{102}n)$. More recently, Javadi, Kohayakawa and
Miralaei~\cite{JKM} further improved the bound to $O(k^{34}n)$
via their results on long subdivisions.

On the other hand, the recent result of Beke, Li and
Sahasrabudhe~\cite{BLS} immediately yields a stronger lower bound. Indeed,
since $P_n\subseteq C_n$,
$
\widehat R_k(C_n)
\ge
\widehat R_k(P_n)
=
\Omega(k^2\log k)n.
$
Thus, prior to the present work, the best known bounds for sufficiently
long even cycles were
\[
\Omega(k^2\log k)n
\le
\widehat R_k(C_n)
\le
O(k^{34}n).
\]
Our main result closes this gap up to absolute constant factors.

\begin{theorem}\label{thm:main}
For every integer $k\ge2$ and every even integer
$n\ge 100\log k$,
$$\rhat_k(C_n)=\Theta(k^2\log k)n.
$$
\end{theorem}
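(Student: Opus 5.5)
The lower bound is immediate, as noted in the introduction: $P_n\subseteq C_n$, so $\rhat_k(C_n)\ge \rhat_k(P_n)=\Omega(k^2\log k)n$ by the result of Beke, Li and Sahasrabudhe. All the work is in the upper bound $\rhat_k(C_n)=O(k^2\log k)n$.

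\textbf{Host graph and colour class.} I would follow the Krivelevich / Dudek--Pra{\l}at strategy for paths and add a step that closes paths into even cycles. The host graph is $G=G(N,p)$ with $N=Ckn$ and $p=C'\log k/(kn)$ for large absolute constants $C,C'$. Then $e(G)=\Theta(k^2\log k)n$ with high probability. Fix a $k$-colouring of $G$. Some colour class $H$ has at least $e(G)/k$ edges, so its density $p/k$ is relative to a set of about $N$ vertices. Repeatedly deleting low-degree vertices yields a subgraph $H'\subseteq H$ with minimum degree of order $\log k$ on a vertex set of order $kn$ (relative to $p$).

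\textbf{Expansion and long paths.} From random-graph pseudorandomness I want a sparse-expander property: every two disjoint sets of size $\ge \varepsilon N/k$ span an edge of $G$. This is exactly where $\log k$ enters, via a union bound over about $\binom{N}{\varepsilon N/k}$ pairs against $e^{-p(\varepsilon N/k)^2}$. Combining this with the DFS argument run inside $H$ gives a monochromatic path of length $\Omega(n)$ in the standard way. More precisely, I would apply DFS to obtain a subgraph $H^*$ in which every set of size $\le n$ expands in the Friedman--Pippenger / P\'osa sense. Then $H^*$ contains long paths and has the P\'osa rotation--extension property.

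\textbf{Closing into even cycles.} To get a cycle of length exactly $n$, I would use P\'osa rotations with a fixed endpoint to produce many endpoints $B(v)$, of size linear in $n/\text{poly}$. Then I would use the expansion of $H^*$ together with a bipartite-parity structure to close a path of exact length $n-1$. Concretely, I would first find in $H^*$ a ``bipartite expander'' containing paths of all lengths in a long interval. A standard route is an absorber: a short structure of many cycles sharing paths (for instance, a sequence of $C_4$'s, or a "ladder" built from $K_{2,2}$-like gadgets) that lets us adjust the length of a cycle by $2$. Even cycles cost nothing here, since bipartite expanders contain cycles of every even length in a range; this is the known Krivelevich-type result, that an $(n,d)$-expander contains cycles of all lengths in an interval.

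\textbf{Main obstacle.} The hard part is getting exact length $n$ while paying only $O(k^2\log k)n$ edges. The expansion available inside one colour class is weak: sets expand only by a factor of order $1$ at scale $n$, not polylogarithmically. The cycle-of-all-lengths results for expanders need expansion at all scales up to about $n$ plus a connectivity property between large sets. My first attempt would be a two-round exposure argument. I would reserve a small random subset $R$ of vertices, find the long path in $V\setminus R$ via DFS, and then use the edges of $H$ into $R$ to attach short connecting paths. These connecting paths would come from ``lollipop''-type gadgets between rotation-endpoint sets, adjusting lengths in steps of $2$. Parity is fixed by working in a bipartite subgraph of $H$, obtained by taking a max-cut, which keeps half the edges. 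If this fails, I would instead invoke a theorem that any graph containing a linear-size $(\,\cdot\,)$-expander in which all sets of size $\le n$ expand well has $C_{2\ell}$ for all $\ell$ in $[\log n, n]$. Then the only remaining task is to verify that expansion inside $H^*$.
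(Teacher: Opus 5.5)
Your lower bound is fine, but the upper bound has a genuine gap exactly at the step you call the main obstacle. Nothing in the proposal produces a monochromatic cycle of length \emph{exactly} $n$, and none of the routes you list closes this gap.
\begin{itemize}
\item Two-round exposure with a reserved set $R$ is not available in a Ramsey argument. All of $G$ is exposed before the colouring, so no fresh randomness remains inside a colour class. If $R$ is fixed in advance, the adversary can simply give every edge touching $R$ a non-majority colour.
\item A $C_4$-ladder absorber cannot be relied on either. At this density a typical host has only $O_k(1)$ four-cycles, independently of $n$, so the adversary can make every colour class $C_4$-free at negligible cost.
\item Closing P\'osa-rotation paths needs edges between large endpoint sets \emph{inside the colour class}, which you do not have.
\item Deferring to an unnamed ``all even cycle lengths in expanders'' theorem assumes the crux. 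The cycle-length results in the literature (e.g.\ Friedman--Krivelevich) need expansion up to a constant fraction of the vertex set, or edges between every two linear-size sets of the graph itself. A colour class only inherits expansion for sets of size at most $n$ out of $\Theta(kn)$ vertices, so ``verify that expansion inside $H^*$'' is precisely where the argument breaks.
\end{itemize}
There are also earlier slips. With $N=Ckn$ and $p=C'\log k/(kn)$ you get $e(G)=\Theta(k\log k)\,n$, which is below the Beke--Li--Sahasrabudhe lower bound; the majority colour then has average degree $O(\log k/k)$. You need $p=\Theta(\log k/n)$. Also, ``every two large disjoint sets span an edge of $G$'' says nothing about a single colour class. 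What does transfer is local sparsity, $e_G(U)=O(\log k)|U|$ for all $|U|\le 3n$. This forces every subgraph of minimum degree at least $C\log k$ to satisfy $|N(X)|>2|X|$ for every nonempty $X$ with $|X|\le n$. Hence, by the DFS lemma, every such subgraph contains a cycle of length at least $n+2$.

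The missing idea is a length-adjusting mechanism that uses only this local expansion.
\begin{enumerate}
\item The paper works in a bipartite host; your max-cut would serve equally well for parity. It passes to a connected core $J$ of the majority colour with $\delta(J)>8d$ and roots a BFS tree $T$ at $v$.
\item It stops at the first radius $q$ with $|B_q|\le 2|B_{q-1}|$, so $q<1+\log_2|V(J)|$. Averaging over consecutive levels yields a connected $F\subseteq J[L_i\cup L_{i+1}]$ with $\delta(F)\ge d$. By the above, $F$ contains a cycle $C$ with $|C|\ge n+2$.
\item Split $V(F)$ along one child-branch below the deepest common ancestor $z$ of $V(F)\cap L_i$, and apply the Gao--Huo--Ma lemma. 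For every even $2\le s\le |C|-2$ this gives a path of length $s$ in $F$ joining two vertices of $L_i$ in different branches below $z$.
\item Close each such path with the tree path between its endpoints. That tree path has length $2r$ with $r=i-\operatorname{dist}_T(v,z)$, and its internal vertices lie in $L_0\cup\dots\cup L_{i-1}$, so it is internally disjoint from the path in $F$. This gives cycles of all even lengths $2r+2,\dots,2r+|C|-2$.
\item This range contains $n$, because $2r+2=O(\log(kn))<n$ when $n\ge 100\log k$.
\end{enumerate}
Without this step, or an equally concrete substitute, your argument only yields a long monochromatic cycle, not a copy of $C_n$.
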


Although our result represents a substantial improvement over the previous
bounds, the proof uses only fairly standard tools.  We combine local expansion with a breadth-first-search (BFS)
decomposition and a path-length adjustment argument. Starting from a sparse
bipartite host, we find a monochromatic subgraph of large minimum degree and
root a BFS tree in it. By stopping at the first BFS ball
whose growth slows down, we find, within two consecutive levels at
logarithmic depth, a connected subgraph of large minimum degree. The local
expansion of the host then forces a long cycle in this subgraph. Finally, we
combine this cycle with the BFS tree to obtain a range of even cycle lengths,
one of which is exactly $n$.

\smallskip
The rest of the paper is organised as follows. In Section~\ref{sec:host},
we construct the sparse bipartite host graph, establish its local expansion,
and prove a BFS localisation lemma. Section~\ref{sec:tools} develops the tools for controlling cycle lengths
used in the argument, and Section~\ref{sec:main-proof} contains the proof
of Theorem~\ref{thm:main}. We conclude in
Section~\ref{sec:conclusion} with several related open problems.
\section{The host graph and BFS localisation}\label{sec:host}

We use the standard local-sparsity-to-expansion strategy for size--Ramsey
problems; see, for example, Krivelevich~\cite{KriExp}.  

For a graph $G$, we write $\overline d(G)$ for its average degree. For a graph $G$ and $X\subseteq V(G)$, let $N_G(X)$ denote the set of
vertices in $V(G)\setminus X$ having a neighbour in $X$. Throughout the paper, all logarithms are natural. The following lemma provides a bipartite host graph
whose local sparsity guarantees the expansion.
\begin{lemma}\label{lem:host}
Let $k\ge2$ and $n\ge100\log k$, and set
$
   M=10^5kn
   $ and $
   p=(\log k)/ n.
$
Then there exists a bipartite graph $\Gamma$ with two vertex classes of
size $M$ such that:
\begin{enumerate}[(1)]
    \item\label{item:host-edges}
  $\frac12pM^2\le e(\Gamma)\le2pM^2$. In particular,
$e(\Gamma)\le2\cdot10^{10}k^2(\log k)n$.
    \item\label{item:host-local}
    $e_\Gamma(U)<10^2(\log k)|U|$ for every
    $U\subseteq V(\Gamma)$ with $1\le |U|\le3n$.

    \item\label{item:localexp} Let
$d=\lceil10^3\log k\rceil$. If $F\subseteq\Gamma$ has minimum degree at
least $d$, then $|N_F(X)|>2|X|$ for every nonempty
$X\subseteq V(F)$ with $|X|\le n$.
\end{enumerate}

\end{lemma}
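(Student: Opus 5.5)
We take $\Gamma$ to be the random bipartite graph $G(M,M,p)$ with classes $A,B$ of size $M$, each of the $M^2$ pairs present independently with probability $p=(\log k)/n$. We show that (1) and (2) each hold with probability greater than $1/2$, so some outcome satisfies both. Property (3) will then follow deterministically from (2).

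\textbf{Step 1: edge count.} Here $e(\Gamma)\sim\mathrm{Bin}(M^2,p)$ with mean
\[
\mu=pM^2=10^{10}k^2n\log k\ge 10^{10}\cdot 4\cdot 100(\log 2)^2,
\]
which is huge. By Chernoff, $e(\Gamma)\in[\mu/2,2\mu]$ with probability $1-o(1)$. The stated upper bound $2\cdot10^{10}k^2(\log k)n$ is just $2pM^2$ written out.

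\textbf{Step 2: local sparsity.} We use a first-moment bound. Fix $u=|U|$ with $1\le u\le 3n$ and set $t=\lceil 10^2(\log k)u\rceil$. The set $U$ spans at most $u^2/4$ pairs of $\Gamma$, so
\[
\Pr\bigl[e(U)\ge t\bigr]\le\binom{u^2/4}{t}p^t\le\Bigl(\frac{eu^2p}{4t}\Bigr)^t\le\Bigl(\frac{eu}{400n}\Bigr)^t .
\]
Since $u\le 3n$, this is at most $(3e/400)^t\le 2^{-7t}$ and also at most $(u/(100n))^t$. There are at most $\binom{2M}{u}\le (2eM/u)^u=(2e\cdot10^5kn/u)^u$ choices of $U$. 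Summing, the $u$-th term is at most
\[
\Bigl[\frac{2e\cdot 10^5kn}{u}\Bigl(\frac{eu}{400n}\Bigr)^{100\log k}\Bigr]^u .
\]
Write $x=u/n\le 3$. The bracket is $(2e\cdot10^5k/x)\,(ex/400)^{100\log k}$. Its exponent of $x$ is $100\log k-1>0$, so the bracket is increasing in $x$, and at $x=3$ it is at most
\[
2e\cdot 10^5k\cdot(0.021)^{100\log k}=2e\cdot10^5\,k^{1+100\log 0.021}\le 2e\cdot10^5\,k^{-385},
\]
which is tiny already for $k\ge2$. The one delicate point is the regime of small $u$, where the factor $1/x$ blows up. There the exponent $100\log k\ge 69$ dominates: at $x=1/n$ we have $(e/(400n))^{69}\cdot 10^6kn\ll 1$. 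So each term is at most $\varepsilon^u$ for a small $\varepsilon$, the sum over $u$ is $o(1)$, and (2) holds with probability close to $1$. The main obstacle is exactly this bookkeeping: making the bound uniform in $u$ and $k$. The monotonicity-in-$x$ observation is what handles it.

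\textbf{Step 3: expansion, deterministically from (2).} Let $F\subseteq\Gamma$ have $\delta(F)\ge d$. Suppose some nonempty $X$ with $|X|\le n$ has $|N_F(X)|\le 2|X|$. Put $U=X\cup N_F(X)$, so $|U|\le 3|X|\le 3n$. Every $F$-edge at a vertex of $X$ lies inside $U$, so
\[
e_\Gamma(U)\ge e_F(U)\ge \tfrac12 d|X|\ge 500(\log k)|X|.
\]
On the other hand, (2) gives
\[
e_\Gamma(U)<100(\log k)|U|\le 300(\log k)|X|,
\]
a contradiction.
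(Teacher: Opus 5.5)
Your proposal is correct and follows essentially the same route as the paper: a random $G(M,M,p)$, Chernoff for (1), the same first-moment union bound for (2) (your monotonicity in $x=u/n$ is exactly the paper's step $(u/n)^{10^2\log k-1}\le 3^{10^2\log k-1}$), and the identical deterministic deduction of (3) from (2). There are two harmless slips, neither of which is used once monotonicity covers all $u\le 3n$. First, the aside $(3e/400)^t\le 2^{-7t}$ is false, since $3e/400\approx0.0204>2^{-7}$; only $(3e/400)^t\le 2^{-5t}$ holds. Second, your separate small-$u$ remark is redundant, and as written it is not valid uniformly in $k$, because it replaces the exponent $100\log k$ by $69$ while keeping the factor $k$.
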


\begin{proof}
Let $\Gamma$ be the binomial random bipartite graph with two vertex
classes of size $M$, where every cross-edge is present independently with
probability $p$.  Let
$
   \mu=\mathbb E(e(\Gamma))=pM^2.
$
By the standard Chernoff inequalities,
\[
   \Pr\!\left(
      e(\Gamma)<\frac12pM^2
      \text{ or }
      e(\Gamma)>2pM^2
   \right)
   \le2e^{-\mu/12}<\frac13.
\]

For \ref{item:host-local}, fix an
integer $u$ with $1\le u\le3n$. If
$
   \lceil10^2(\log k)u\rceil>u^2/4,
$
then no $u$-vertex set spans $\lceil10^2(\log k)u\rceil$ edges. Otherwise, we obtain
\begin{align*}
 \Pr\bigl(&\exists U\subseteq V(\Gamma),\ |U|=u,\
      e_\Gamma(U)\ge \lceil10^2(\log k)u\rceil\bigr)\\
 &\quad\le
   \binom{2M}{u}
   \binom{\lfloor u^2/4\rfloor}{\lceil10^2(\log k)u\rceil}
   p^{\lceil10^2(\log k)u\rceil}\\
 &\quad\le
   \left(\frac{2eM}{u}\right)^u
   \left(\frac{eu^2p}
   {4\lceil10^2(\log k)u\rceil}\right)^{
   \lceil10^2(\log k)u\rceil}
   \qquad \
   \text{(since $\binom ab\le \left(\frac{ea}{b}\right)^b$)}\\
 &\quad\le
   \left[
      2e\cdot10^5k
      \left(\frac{e}{400}\right)^{10^2\log k}
      \left(\frac{u}{n}\right)^{10^2\log k-1}
   \right]^u
   \qquad
   \text{(by the choices of $M$ and $p$)}\\
 &\quad\le
   \left[
      \frac{2e\cdot10^5k}{3}
      \left(\frac{3e}{400}\right)^{10^2\log k}
   \right]^u
   \qquad \qquad \qquad \quad \
   \text{(since $u\le3n$)}\\
 &\quad=
   \left[
      \frac{2e\cdot10^5}{3}
      k^{\,1-10^2\log\frac{400}{3e}}
   \right]^u\\
 &\quad<
   4^{-u}. \qquad
\qquad \qquad \qquad \qquad \qquad  \qquad  \qquad  \quad \ \  \text{(holds for every $k\ge2$)}
\end{align*}
Therefore, the probability that condition~\ref{item:host-local} fails is
less than $\sum_{u\ge1}4^{-u}=1/3$. Together with the estimate for
$e(\Gamma)$, this shows that conditions~\ref{item:host-edges} and
\ref{item:host-local} hold simultaneously with positive probability.

It remains to verify condition~\ref{item:localexp}. Suppose that $F\subseteq\Gamma$ satisfies
$\delta(F)\ge d$. If there were a nonempty set
$X\subseteq V(F)$ with $|X|\le n$ and $|N_F(X)|\le2|X|$, then, putting
$U=X\cup N_F(X)$, we would have $|U|\le3|X|\le3n$. Since every edge of
$F$ incident with a vertex of $X$ has both endpoints in $U$,
\[
   e_\Gamma(U)\ge e_F(U)
   \ge\frac12\sum_{x\in X}d_F(x)
   \ge\frac d2|X|
   \ge\frac d6|U|
   >10^2(\log k)|U|,
\]
contradicting condition~\ref{item:host-local}. Hence
$|N_F(X)|>2|X|$ for every nonempty $X\subseteq V(F)$ with $|X|\le n$,
as required.
\end{proof}

We next record a deterministic localisation lemma showing that a graph of
large minimum degree contains, within two consecutive BFS levels at
logarithmic depth, a connected subgraph of large minimum degree. For a tree $T$ with root $v$, the $j$th level of $T$ is the set
$L_j=\{x\in V(T):\operatorname{dist}_T(v,x)=j\}.
$
\begin{lemma}\label{lem:bfs-localisation}
Let $d\ge2$, and let $J$ be a connected bipartite graph with
$\delta(J)>8d$. Let $T$ be a BFS tree of $J$
rooted at an arbitrary vertex $v$, with levels
$L_0,L_1,\ldots$. Then there exist integers $q$ and $i$ such that
$$
   1\le i<q<1+\log_2|V(J)|
$$
and a connected subgraph
$
   F\subseteq J[L_i\cup L_{i+1}]
$
with
$
   \delta(F)\ge d.
$
\end{lemma}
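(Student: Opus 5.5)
Let $B_j=L_0\cup\cdots\cup L_j$ denote the BFS ball of radius $j$. The plan is to stop at the first radius where the ball fails to double, and then average the edge count of $J[B_q]$ over pairs of consecutive levels.

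\emph{Choice of $q$.} Let $q\ge1$ be the least index with $|B_q|<2|B_{q-1}|$. Such $q$ exists, because $B_j=V(J)$ for large $j$ and then the ball stops growing. Since $\delta(J)>8d$, we have $|B_1|\ge 1+8d>2=2|B_0|$, so $q\ge2$. By minimality, $|B_{q-1}|\ge 2^{q-2}|B_1|\ge 2^{q-2}(1+8d)>2^{q-1}$. Hence $2^{q-1}<|V(J)|$, that is, $q<1+\log_2|V(J)|$. This strictness comes from using $|B_1|>2$ rather than $|B_0|=1$ as the base of the doubling.

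\emph{Edge count in $B_q$.} Because $T$ is a BFS tree, every neighbour of a vertex of $B_{q-1}$ lies in $B_q$. Because $J$ is bipartite and consecutive BFS levels have opposite parity, there are no edges inside a level; so every edge of $J[B_q]$ joins $L_j$ and $L_{j+1}$ for some $0\le j\le q-1$. Every edge incident with $B_{q-1}$ lies in $J[B_q]$ and is counted at most twice in $\sum_{x\in B_{q-1}}d_J(x)$. Therefore
\[
e(J[B_q])\ge \tfrac12\cdot 8d\,|B_{q-1}|=4d|B_{q-1}|>2d|B_q|.
\]
We discard the at most $|L_1|\le|B_q|$ edges between $L_0$ and $L_1$. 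What remains is
\[
\sum_{i=1}^{q-1}e_J(L_i,L_{i+1})>(2d-1)|B_q|.
\]
Each level appears in at most two of the pairs, so $\sum_{i=1}^{q-1}|L_i\cup L_{i+1}|\le 2|B_q|$. By averaging, some $i$ with $1\le i\le q-1<q$ satisfies
\[
e_J(L_i,L_{i+1})>\tfrac{2d-1}{2}\,|L_i\cup L_{i+1}|.
\]

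\emph{Extracting $F$.} We apply the standard peeling argument to $H=J[L_i\cup L_{i+1}]$. Repeatedly delete vertices of degree at most $e(H)/|V(H)|$; this never decreases the ratio of edges to vertices, so it terminates in a nonempty subgraph with minimum degree $>e(H)/|V(H)|>d-\tfrac12$. Since degrees are integers, this gives minimum degree at least $d$. A connected component of that subgraph has the same minimum degree, and we take it as $F$.

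The only delicate points are bookkeeping. The first is obtaining the strict inequality $q<1+\log_2|V(J)|$, handled above via $|B_1|>2$. The second is forcing $i\ge1$ while keeping the density above $d-\tfrac12$; this is exactly why the hypothesis asks for $\delta(J)>8d$ rather than something smaller.
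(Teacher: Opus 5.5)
Your proof follows the paper's argument almost step for step. You use the same first non-doubling radius $q$, the same bound $2e(J[B_q])\ge\sum_{x\in B_{q-1}}d_J(x)>8d|B_{q-1}|$, the same averaging over consecutive level pairs, and the same peeling. The one real difference is how you force $i\ge1$, and that step has a small gap.

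You discard the $|L_1|$ root edges using the crude bound $|L_1|\le|B_q|$. This leaves only an edge-to-vertex ratio $>d-\tfrac12$, so to conclude $\delta(F)\ge d$ you need integrality of both the degrees \emph{and of $d$}. The lemma only assumes $d\ge2$. For non-integer $d$ (say $d=2.3$), your bound gives only $\delta(F)\ge2$. The lemma is applied in Section~\ref{sec:main-proof} with the integer $d=\lceil10^3\log k\rceil$, so your version suffices for the paper's purposes, but it does not prove the statement as written.

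The paper avoids this by discarding nothing. It averages over all $0\le i<q$ to find some $J[L_i\cup L_{i+1}]$ with average degree $>2d$. It then takes a vertex-minimal subgraph $F$ with $\overline d(F)>2d$, which is connected with $\delta(F)>d$. Only afterwards does it rule out $i=0$: $J[L_0\cup L_1]$ is a star, and every nonempty subgraph of a star has a vertex of degree at most $1<d$.

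If you prefer to keep your order of operations, a sharper count also works. Among the pairs with $i\ge1$, $L_0$ never occurs and $L_1$ occurs only once, so $\sum_{i=1}^{q-1}(|L_i|+|L_{i+1}|)\le2|B_q|-2-|L_1|$. Hence, for $d\ge1$,
\[
   \sum_{i=1}^{q-1}e_J(L_i,L_{i+1})>2d|B_q|-|L_1|\ge d\sum_{i=1}^{q-1}\bigl(|L_i|+|L_{i+1}|\bigr).
\]
So some $H=J[L_i\cup L_{i+1}]$ with $i\ge1$ has $e(H)>d|V(H)|$, and your peeling then gives $\delta(F)>d$ directly, with no integrality needed.
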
\begin{proof}
For $j\ge0$, let
$
   B_j=\bigcup_{h=0}^jL_h.
$
Let $q$ be the smallest positive integer such that
$
   |B_q|\le2|B_{q-1}|.
$
Such a $q$ exists, since eventually the BFS balls stop growing. Moreover,
since $\delta(J)>8d$,
we have $
|B_1|=1+d_J(v)>2|B_0|,
$
and hence $q\ge2$.
By the minimality of $q$,
$
   |B_{q-1}|>2^{q-1},
$
and hence
$
   q<1+\log_2|V(J)|.
$

We next show that $J[B_q]$ has average degree greater than $4d$.
Since every neighbour in $J$ of a vertex in $B_{q-1}$ lies in $B_q$, we obtain
\[
   2e\bigl(J[B_q]\bigr)
   \ge\sum_{x\in B_{q-1}}d_J(x)
   >8d|B_{q-1}|
   \ge4d|B_q|,
\]
where the last inequality follows from the choice of $q$. Therefore
$   \overline d\bigl(J[B_q]\bigr)>4d.
$

Since $J$ is bipartite, every edge of $J[B_q]$ joins two consecutive
BFS levels. Moreover,
\[
   \sum_{h=0}^{q-1}e_J(L_h,L_{h+1})
   =e\bigl(J[B_q]\bigr)
   \qquad\text{and}\qquad
   \sum_{h=0}^{q-1}\bigl(|L_h|+|L_{h+1}|\bigr)
   \le2|B_q|.
\]
Hence, for some $0\le i<q$,
\[
\begin{aligned}
   \overline d\bigl(J[L_i\cup L_{i+1}]\bigr)
   &\ge
   \frac{2\sum_{h=0}^{q-1}e_J(L_h,L_{h+1})}
        {\sum_{h=0}^{q-1}(|L_h|+|L_{h+1}|)}\ge
   \frac{e\bigl(J[B_q]\bigr)}{|B_q|}
   =\frac12\overline d\bigl(J[B_q]\bigr)
   >2d.
\end{aligned}
\]

Choose a nonempty subgraph
$
   F\subseteq J[L_i\cup L_{i+1}]
$
with the minimum number of vertices subject to
$
   \overline d(F)>2d.
$
Then $F$ is connected and $\delta(F)>d$. Finally, $i\ne0$, since
$J[L_0\cup L_1]$ is a star, while $\delta(F)> d\ge2$.
Therefore
$
   1\le i<q<1+\log_2|V(J)|,
$
as required.
\end{proof}

\section{Tools for controlling cycle lengths}\label{sec:tools}
In this section, we develop tools for controlling cycle lengths using a BFS tree.
We work with two consecutive BFS levels, so that suitable pairs of endpoints
are joined in the BFS tree by paths of the same length. Paths of varying even
lengths in the resulting two-level subgraph then yield a sequence of
consecutive even cycle lengths. This BFS-tree approach goes back to Verstra\"ete~\cite{Verstraete}; see also
Sudakov and Verstra\"ete~\cite{SV} for its use in expanding graphs, although our argument differs in the details.

The following lemma, due to Krivelevich~\cite{KrivelevichLongCycles}, shows that
expansion forces a long cycle. This standard DFS
consequence is frequently used in arguments on cycles in random graphs;
see Liu's lecture notes~\cite[Theorem~3.8]{LiuCyclesTrees}.

\begin{lemma}[{\cite{KrivelevichLongCycles}}]
\label{lem:dfs-cycle}
Let $a$ be a positive integer, let $t\ge2$, and let $G$ be a graph with
$|V(G)|>a$. Suppose that
$
   |N_G(X)|\ge t
$
for every set $X\subseteq V(G)$ satisfying $a/2\le |X|\le a$. Then $G$
contains a cycle of length at least $t+1$.
\end{lemma}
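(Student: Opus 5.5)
The plan is the standard depth-first-search argument. Run DFS on $G$, maintaining a partition $V(G)=S\cup U\cup D$. Here $S$ is the set of unvisited vertices, $U$ is the current DFS stack (which always spans a path $P$ in the order it was pushed), and $D$ is the set of finished vertices. At each step, if the top of the stack has a neighbour in $S$, we push it. Otherwise we pop the top vertex into $D$. If the stack is empty, we move an arbitrary vertex of $S$ onto the stack. Two invariants hold throughout: there are never edges between $D$ and $S$, since a vertex is popped only after all its neighbours have left $S$; and each step moves exactly one vertex, either out of $S$ or into $D$.

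Next we locate the right moment. Initially $D=\emptyset$, and at the end $D=V(G)$ with $|V(G)|>a$. Since $|D|$ increases by at most one per step, there is a moment with $|D|=\lceil a/2\rceil$, so $a/2\le |D|\le a$. Apply the hypothesis to $X=D$: we get $|N_G(D)|\ge t$. Because $D$ sends no edges to $S$, we have $N_G(D)\subseteq U$, and therefore $|U|\ge t$ at that moment. So the stack path $P$ has at least $t$ vertices, although this does not yet give a cycle.

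To obtain a cycle we need a slightly sharper choice. Consider the first vertex $x\in D$ on $P$'s side, or rather the deepest vertex: let $y$ be the vertex of $U$ in $N_G(D)$ that is closest to the bottom of the stack. Then some $z\in D$ is adjacent to $y$. This $z$ was pushed after $y$, while $y$ was already on the stack; at that time the stack path ran from $y$ up to $z$, and it contained every vertex of $U$ above $y$ that is still present. That gives a path from $y$ to $z$ of length at least $|U\cap N(D)|-1\ge t-1$, closed up by the edge $zy$.

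Making this last step rigorous is the main obstacle, because the vertices of $U$ above $y$ were pushed after $y$ and are still present, but they need not all have been present when $z$ was on the stack. The cleaner fix I would try first is to apply the observation at the moment $z$ is popped, taking $z$ to be the last vertex to enter $D$. At that instant $z$ is on top of the stack, so the stack is the path from the bottom to $z$. Moreover, every vertex of $N_G(D)$ lies on the stack, and every vertex of $N_G(z)$ lies in $U\cup D$.

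I would still need to ensure that some vertex of $D$ sees the low vertex $y$ while lying on a stack segment of length at least $t$. The resolution is to take the first moment $\tau$ at which $|D|=\lceil a/2\rceil$, and to use the fact that a vertex $w\in D$ has all of its $U$-neighbours at or below the position of its parent, since $w$ was on top of the stack above them when it was popped. Taking $w\in D$ adjacent to the lowest stack vertex $y$, the segment of the stack from $y$ to $w$ at the time $w$ was popped contains all current stack vertices above $y$. These vertices are at least $t-1$ in number, because they were pushed before $\tau$ and have not been popped. Hence they were present when $w$ was popped, provided they were pushed before $w$ was popped. This is the detail to check carefully; if it fails, I would instead argue via the tree structure: DFS produces a forest in which every edge of $G$ joins an ancestor to a descendant, and the $t$ vertices of $N(D)\cap U$ are all ancestors of some common $w$. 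That yields a path of length at least $t$ from the highest such ancestor down to $w$, and a back edge from $w$ to the lowest one closes a cycle of length at least $t+1$.
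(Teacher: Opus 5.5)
\textbf{There is a genuine gap at the cycle-closing step.} Your first two steps are correct, but they only give a path: at a moment with $|D|=\lceil a/2\rceil$ you correctly get $N_G(D)\subseteq U$ and $|U|\ge t$. The proviso you flag (``provided they were pushed before $w$ was popped'') is exactly where the argument breaks, and it fails in general. A vertex $w\in D$ adjacent to the lowest vertex $y$ of $N_G(D)$ is a descendant of $y$. However, after $w$ is popped the search may backtrack to a vertex just above $y$ and push an entirely new branch. The current stack vertices in that branch are not ancestors of $w$, so the segment from $y$ to $w$ does not contain them.

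Your tree-structure fallback has the same flaw. A common descendant of all of $N_G(D)\cap U$ does exist (any $D$-neighbour of the topmost such vertex), but nothing makes it adjacent to $y$. Also, the path from the \emph{highest} such ancestor down to it may have length $1$, not $t$. The single set $X=D$ simply does not carry the needed information. Suppose the stack is $u_1,\ldots,u_m$ and $D=\{w_1,\ldots,w_m\}$, where each $w_j$ was pushed from $u_j$, is adjacent exactly to $u_{j-1}$ and $u_j$, and was popped immediately, before $u_{j+1}$ was pushed. Then $|N_G(D)|=m$ is as large as you like, yet every cycle formed by an edge from $D$ together with tree paths is a triangle. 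So the bound $|N_G(D)|\ge t$ for this particular $X$ cannot by itself produce a long cycle.

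\textbf{The missing idea} is to apply the hypothesis to a set $X$ all of whose vertices hang below one tree vertex, so that every vertex of $N_G(X)$ is an ancestor of every vertex of $X$. This is the standard argument behind the cited lemma.
\begin{enumerate}[(i)]
\item Take a DFS forest; every edge of $G$ joins an ancestor to a descendant.
\item Some component has at least $a/2$ vertices. Otherwise a union of components with total size in $[a/2,a]$ would have empty neighbourhood.
\item Let $u$ be a deepest vertex whose subtree $T_u$ has at least $a/2$ vertices. Each child subtree of $u$ then has fewer than $a/2$ vertices. A greedy union of child subtrees gives $X$ with $a/2\le |X|<a$; if all child subtrees together have fewer than $a/2$ vertices, take $X=V(T_u)$, which has $a/2\le|X|\le\lceil a/2\rceil$.
\item Now $N_G(X)$ lies on the root--$u$ tree path, and each of its vertices is an ancestor of every vertex of $X$.
\item Let $w\in N_G(X)$ be the vertex closest to the root, and let $x\in X$ be a neighbour of $w$. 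The tree path from $w$ to $x$ contains all of $N_G(X)$ together with $x$, so it has length at least $t\ge 2$. The non-tree edge $xw$ closes a cycle of length at least $t+1$.
\end{enumerate}
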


We use the following lemma of Gao, Huo and Ma~\cite[Lemma~3.2]{GHM},
whose proof builds on the Bondy--Simonovits--Verstra\"ete chorded-cycle
lemma~\cite{BondySimonovits,Verstraete} together with a path-extension
argument. It provides paths of all shorter lengths from a given cycle.

We call a partition nontrivial if both of its parts are nonempty. If
$V(G)=A\mathbin{\dot\cup}B$, an $A$--$B$ path means a path with one
endpoint in $A$ and the other in $B$.

\begin{lemma}[{\cite{GHM}}]\label{lem:detour-adjuster}
Let $G$ be a connected graph with $\delta(G)\ge3$, let
$V(G)=A\mathbin{\dot\cup}B$ be a nontrivial partition, and let $C$ be a cycle
in $G$. Unless $G$ is bipartite with bipartition $(A,B)$, for every integer
$1\le \ell<|C|$ there is an $A$--$B$ path of length $\ell$ in $G$.
\end{lemma}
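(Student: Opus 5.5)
We prove the lemma of Gao, Huo and Ma (\cref{lem:detour-adjuster}) by induction on $|V(G)|+|E(G)|$, combining the Bondy--Simonovits--Verstra\"ete chorded-cycle argument with a path-extension step.

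\textbf{Step 1 (reduction to a minimal configuration).} Let $\ell<|C|$. If the cycle $C$ meets both $A$ and $B$, walk along $C$. The vertex sets of subpaths of $C$ of length $\ell$ change one endpoint at a time. Suppose no subpath of length $\ell$ of $C$ is an $A$--$B$ path. Then, for every $\ell$-arc of $C$, both endpoints lie on the same side. Hence $c_j$ and $c_{j+\ell}$ are on the same side for all $j$, so the colouring of $C$ by sides is periodic with period $\gcd(\ell,|C|)$.

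This periodicity is the case in which we must use the chords and the rest of $G$. If $C$ lies entirely in $A$ (or entirely in $B$), we use connectivity to pick a shortest path $P$ from $C$ to $B$. We then reroute: a path of length $\ell$ along $C$ ending at the attachment vertex, followed by part of $P$, has a $B$-endpoint at total length $\ell$ after truncation. More precisely, we take a subpath of $C$ of length $\ell-s$ followed by the first $s$ edges of $P$, choosing $s\ge1$ so that the endpoint is the first $B$-vertex. This works whenever $\ell\ge|P|$. For smaller $\ell$, we slide along $P$ and the cycle: the first $B$-vertex $b$ of $P$ has a neighbour chain inside $A$, and we need paths of every length from $b$ into $A$. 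Using $\delta\ge3$ at $b$ and the minimum-distance structure gives these.

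\textbf{Step 2 (the periodic case via chords).} This is the main obstacle. Here $C$ alternates sides with period dividing $g=\gcd(\ell,|C|)$. Since $G$ is not bipartite with bipartition $(A,B)$, some edge $e$ lies inside $A$ or inside $B$. We follow the Bondy--Simonovits--Verstra\"ete idea. Take a longest-path or DFS-type structure so that a vertex $x$ has all of its $\ge3$ neighbours on a path $Q$ (a chorded cycle). Then $Q$ together with the chords at $x$ yields, for the relevant pair of endpoints, paths whose lengths form a set closed under small shifts, namely differing by the gaps between consecutive neighbours of $x$. The Bondy--Simonovits lemma states that in a chorded cycle with a nontrivial colouring of vertices, either the colouring is a proper bipartition of the chorded cycle, or there are bicoloured paths of every length below the cycle length.

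We apply this lemma to the subgraph $H=C\cup\{\text{chords}\}$ if $H$ contains an edge inside one side. Otherwise, $H$ is properly coloured by $(A,B)$. In that case we locate a monochromatic-side edge elsewhere and connect it to $C$ along a shortest path. This yields a larger cycle-plus-ear structure in which the parity is broken, and we apply the lemma to the new theta graph, taking care that the new cycle still has length $>\ell$.

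\textbf{Step 3 (assembling).} In each case we obtain an $A$--$B$ path of length exactly $\ell$. I expect the hardest point to be the last subcase of Step 2. There, the only parity-breaking edge is far from $C$, and both the cycle length and the sidedness must be preserved while the ear is spliced in. I would first try to replace $C$ by the longest cycle through the ear, which only increases $|C|$ and hence keeps $\ell<|C|$.
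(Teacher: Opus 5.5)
Your Step 1 is essentially fine: for $\ell<|P|$, take the terminal segment of $P$ of length $\ell$, whose vertices other than the last all lie in $A$; $\delta\ge3$ plays no role there. The genuine gap is Step 2, where all the content of the lemma lies, and it fails in three concrete ways.

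\begin{enumerate}[(i)]
\item \emph{The longest-path chorded cycle is unrelated to $C$.} It can be far shorter than $C$. Glue a copy of $K_4$ at each vertex of a long cycle $C$. Every longest path then ends inside a pendant $K_4$, and the chorded cycle at its end has at most $4$ vertices, so it cannot give lengths up to $|C|-1$.
\item \emph{Your dichotomy is false when $C$ is chordless.} You claim that either $H=C\cup\{\text{chords}\}$ contains a monochromatic edge (so Bondy--Simonovits applies) or $H$ is properly coloured. If $C$ has no chords, $H=C$ is not a theta graph and the chorded-cycle lemma says nothing. For example, $C_6$ coloured $A,A,B,A,A,B$ with $\ell=3$ has monochromatic edges but no $A$--$B$ subpath of length $3$. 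The bare $C_6$ is a counterexample to the lemma without $\delta\ge3$, so this is exactly the case where the degree condition must be used, and your proposal never uses it there.
\item \emph{A shortest path from $C$ to a monochromatic edge is not an ear, and there may be no theta graph containing $C$ at all.} In the pendant-$K_4$ example with $C$ coloured alternately, $C$ is itself a block and every monochromatic edge lies in a pendant block. The lemma still asserts $A$--$B$ paths of every even length below $|C|$, but ``apply the lemma to the new theta graph'' has nothing to apply to. Your replacement of $C$ by a longest cycle through the ear is also unjustified. It is unnecessary anyway, since the theta-graph version gives all lengths below $|V(\theta)|\ge|C|$.
\end{enumerate}

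The missing ingredient is a direct path extension with parity bookkeeping, rather than forcing everything into a theta graph. The paper itself does not reprove the lemma: it imports it from Gao--Huo--Ma, saying only that the proof combines the chorded-cycle lemma with a path-extension argument, and that extension step is what your Step 2 lacks. Call an edge bichromatic if it joins $A$ to $B$; a path is an $A$--$B$ path iff it has an odd number of bichromatic edges. In the periodic case, split according to whether $C$ has a monochromatic edge.

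\textbf{$C$ has a monochromatic edge.} Then some $x=c_j$ has $c_{j-1}$ and $c_{j+1}$ on different sides; otherwise the colouring of $C$ would be $2$-periodic, hence constant or alternating. Let $y$ be a third neighbour of $x$. If $y\in V(C)$, then $C+xy$ is a chorded cycle that is not $(A,B)$-bipartite, and the Bondy--Simonovits--Verstra\"ete lemma finishes. If $y\notin V(C)$, the paths $c_{j+\ell-1}\dots c_jy$ and $c_{j-\ell+1}\dots c_jy$ both have length $\ell$. By $\ell$-periodicity their first vertices lie on the sides of $c_{j-1}$ and $c_{j+1}$ respectively, so one of them is an $A$--$B$ path.

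\textbf{$C$ is properly coloured.} Then $\ell$ is even. Take a shortest path $x_0\dots x_m$ from $V(C)$ to a vertex $x_m$ incident with a monochromatic edge $x_mu$.
\begin{itemize}
\item If $u\in V(C)$, this gives a chord or an ear containing a monochromatic edge. That yields a non-$(A,B)$-bipartite theta graph on at least $|C|$ vertices, and the chorded-cycle lemma applies.
\item Otherwise $R=x_0\dots x_mu$ has exactly one monochromatic edge. If $\ell\le m+1$, take the terminal segment of $R$ of length $\ell$. If $\ell>m+1$, take an arc of $C$ of length $\ell-m-1$ ending at $x_0$, followed by $R$. Either way the path has exactly $\ell-1$ bichromatic edges, an odd number, so it is an $A$--$B$ path.
\end{itemize}
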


The following lemma combines the above path-length lemma with the
minimal-subtree argument appearing in Verstra\"ete~\cite{Verstraete}.  For two vertices of a tree $T$, the \emph{tree path} between them means the unique
path joining them in $T$.

\begin{lemma}\label{lem:bfs-adjuster}
Let $G$ be a connected bipartite graph, let $T$ be a BFS
tree rooted at $v$, and let $L_0,L_1,\ldots$ be its levels.  Suppose that,
for some $i$, a connected subgraph
$
   F\subseteq G[L_i\cup L_{i+1}]
$
has minimum degree at least three and contains a cycle $C$.  Then there is
an integer $r$ with $1\le r\le i$ such that $G$ contains cycles of every
even length
\[
   2r+2,\ 2r+4,\ \ldots,\ 2r+|C|-2.
\]
\end{lemma}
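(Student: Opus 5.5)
The plan is to run Verstra\"ete's minimal-subtree argument on the vertices of $F$ in the lower level $L_i$, and then apply Lemma~\ref{lem:detour-adjuster} to $F$ with a partition chosen so that $F$ is not bipartite with respect to it.

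\textbf{Setup.} Since $G$ is bipartite and $T$ is a BFS tree, every edge of $G$ joins two consecutive levels. Hence $F$ is bipartite with classes $F\cap L_i$ and $F\cap L_{i+1}$. Because $\delta(F)\ge3$ and $F$ is connected, both classes are nonempty, and in fact $|V(F)\cap L_i|\ge3$.

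\textbf{The subtree and the value of $r$.} Let $T_0$ be the minimal subtree of $T$ containing $S:=V(F)\cap L_i$, and let $w$ be its root, i.e.\ the deepest common ancestor of $S$. Since $|S|\ge2$ and all vertices of $S$ lie on the same level, $w$ lies at some level $i-r$ with $1\le r\le i$. By minimality, $w$ has at least two children whose subtrees meet $S$. Fix one such child $c$. Let $A$ be the set of vertices of $S$ descending from $c$, and let
\[
B=V(F)\setminus A=(S\setminus A)\cup\bigl(V(F)\cap L_{i+1}\bigr).
\]
Both parts are nonempty. Moreover, $(A,B)$ is not a bipartition of $F$: the unique bipartition of the connected graph $F$ is $\bigl(S,\,V(F)\cap L_{i+1}\bigr)$, and $A\subsetneq S$.

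\textbf{Obtaining the paths.} Lemma~\ref{lem:detour-adjuster}, applied to $F$, the partition $(A,B)$ and the cycle $C\subseteq F$, gives an $A$--$B$ path $P$ in $F$ of every length $1\le\ell<|C|$. Note that $|C|$ is even because $F$ is bipartite. Take $\ell$ even with $2\le\ell\le|C|-2$. The endpoint $a$ of $P$ lies in $A\subseteq L_i$. Since $P$ alternates between the two levels and has even length, its other endpoint $b$ also lies in $L_i$. As $b\in B$, it follows that $b\in S\setminus A$, so $b$ descends from a child of $w$ other than $c$.

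\textbf{Closing the cycle.} The tree paths from $a$ to $w$ and from $b$ to $w$ each have length $r$. They share only the vertex $w$, because they lie in different child branches of $w$. Apart from their endpoints $a$ and $b$, they use only levels below $i$, so they avoid $P\subseteq L_i\cup L_{i+1}$. Concatenating $P$ with these two tree paths therefore gives a cycle of length $\ell+2r$. Letting $\ell$ range over the even integers in $[2,|C|-2]$ yields all lengths $2r+2,\dots,2r+|C|-2$.

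\textbf{Main obstacle.} The delicate step is choosing the partition so that $F$ is not bipartite with respect to it. Splitting all of $V(F)$ by branches of $w$ could accidentally reproduce the level bipartition. This is avoided by building the subtree only on $L_i$-vertices and placing every $L_{i+1}$-vertex of $F$ into $B$.
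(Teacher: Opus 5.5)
Your proposal is correct and follows essentially the same route as the paper: take the deepest common ancestor of $V(F)\cap L_i$, and let $A$ be the $L_i$-vertices of $F$ in one child branch and $B$ everything else. Then apply Lemma~\ref{lem:detour-adjuster} for even-length $A$--$B$ paths, use parity to put both endpoints in $L_i$, and close each path with the two tree paths of length $r=i-\operatorname{dist}_T(v,w)$, exactly as in the paper.
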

\begin{proof}
The idea is to use the BFS tree to define a nontrivial partition
$V(F)=A\mathbin{\dot\cup}B$ by separating one branch below the deepest
common ancestor of $V(F)\cap L_i$ from the rest. 

Put
$
   U=V(F)\cap L_i.
$
Since $\delta(F)\ge3$, the set $U$ contains at least two vertices.
Let $T_U$ be the unique minimal subtree of $T$ containing every vertex of
$U$, and let $z$ be its unique vertex of minimum depth. Equivalently, $z$
is the deepest common ancestor of all vertices of $U$ in the rooted tree
$T$. Observe that $z$ lies in a level strictly smaller than $i$.

We call the components of $T-z$ rooted at children of $z$ the
\emph{child-components} of $z$. Then at least two child-components contain
vertices of $U$; otherwise all of $U$ would lie below one child of $z$,
contradicting the definition of $z$. Choose one such component and call it $K$. Define
$
   A=U\cap V(K)
$
and
$
   B=V(F)\setminus A.
$
Then both $A$ and $U\setminus A$ are nonempty, and $(A,B)$ is a nontrivial
partition of $V(F)$.

\begin{claim}\label{claim:AB-paths}
For every even integer $2\le s\le |C|-2$, there is an $A$--$B$ path
$Q_s$ of length $s$ in $F$.
\end{claim}

\begin{proof}
Since $G$ is bipartite and $T$ is a BFS spanning tree of
$G$, every edge of $G[L_i\cup L_{i+1}]$ joins $L_i$ to $L_{i+1}$. Hence $F$ has bipartition
$
   \bigl(U,V(F)\cap L_{i+1}\bigr).
$
The partition $(A,B)$ is not a bipartition of $F$: if
$u\in U\setminus A$, then any neighbour
$w\in V(F)\cap L_{i+1}$ of $u$ satisfies $u,w\in B$.
The claim now follows from Lemma~\ref{lem:detour-adjuster}.
\end{proof}

\begin{claim}\label{claim:bfs-endpoints}
For every even integer $2\le s\le |C|-2$, the endpoints of $Q_s$ lie in
two distinct child-components of $T-z$, and the unique tree path between
them has length
$
   2\bigl(i-\operatorname{dist}_T(v,z)\bigr).
$
\end{claim}

\begin{proof}
One endpoint of $Q_s$ lies in $A\subseteq U$, while the other lies in $B$.
Since $F$ is bipartite and $s$ is even, the two endpoints lie in the same
bipartition class. Hence the endpoint in $B$ also lies in $U$, and therefore
belongs to $U\setminus A$.
Thus one endpoint lies in $K$, while the other lies in a different
child-component of $T-z$. Hence their unique tree path passes through $z$.
Since both endpoints lie in $L_i$, each is at distance
$
   i-\operatorname{dist}_T(v,z)
$
from $z$, and hence their tree path has length
$
   2\bigl(i-\operatorname{dist}_T(v,z)\bigr).
$
\end{proof}

For each even $s$ with $2\le s\le |C|-2$, let $P_s$ be the tree path
between the endpoints of $Q_s$. By Claim~\ref{claim:bfs-endpoints}, the path $P_s$ has length
$2\bigl(i-\operatorname{dist}_T(v,z)\bigr)$.
Moreover, all internal vertices of $P_s$ lie in
$L_0\cup\cdots\cup L_{i-1}$, while $Q_s$ lies in
$L_i\cup L_{i+1}$. Thus $P_s$ and $Q_s$ are internally vertex-disjoint,
and their union is a cycle of length
$   2\bigl(i-\operatorname{dist}_T(v,z)\bigr)+s.
$

Letting $s$ run through $2,4,\ldots,|C|-2$ gives cycles of every even
length from
$2\bigl(i-\operatorname{dist}_T(v,z)\bigr)+2$ to
$2\bigl(i-\operatorname{dist}_T(v,z)\bigr)+|C|-2$.
Setting $r=i-\operatorname{dist}_T(v,z)$, we have $1\le r\le i$, and the
result follows.
\end{proof}

\section{Proof of Theorem \ref{thm:main}}\label{sec:main-proof}
The lower bound follows from~\cite{BLS}, since $P_n\subseteq C_n$. It remains to prove
the upper bound.
Let $\Gamma$ be a graph given by Lemma~\ref{lem:host}, and consider an
arbitrary $k$-edge-colouring of $\Gamma$. Let $H_0$ be the spanning
subgraph of $\Gamma$ formed by the edges of a colour appearing most
frequently. Let $d=\lceil10^3\log k\rceil$. Since $H_0$ has $2M$ vertices, condition~\ref{item:host-edges} gives
\[
   \overline{d}(H_0)
   =\frac{e(H_0)}{M}
   \ge\frac{e(\Gamma)}{kM}
   \ge\frac{pM}{2k}
   =5\cdot10^4\log k
   \ge16d.
\]

\begin{claim}\label{claim:core}
There is a connected induced subgraph $J\subseteq H_0$ such that
$\overline d(J)\ge16d$ and $\delta(J)>8d$.
\end{claim}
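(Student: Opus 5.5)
We use the standard peeling argument that turns large average degree into large minimum degree, followed by passing to a suitable component. Starting from $H_0$, which satisfies $\overline d(H_0)\ge16d$, we repeatedly delete a vertex of degree at most $8d$ in the current graph. Each deletion removes one vertex and at most $8d$ edges.

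The quantity to track is $e(H)-8d|V(H)|$. Initially $e(H_0)=\tfrac12\overline d(H_0)|V(H_0)|\ge8d|V(H_0)|$. Deleting a vertex of degree at most $8d$ decreases $e(H)$ by at most $8d$ and decreases $8d|V(H)|$ by exactly $8d$, so $e(H)-8d|V(H)|$ never decreases. To make the process stop at a nonempty graph, it is convenient to note that strictness is available. Indeed, $H_0$ is nonempty with $e(H_0)\ge 8d|V(H_0)|>0$. Alternatively, one can delete vertices of degree at most $8d$ and compare with the invariant $e(H)\ge 8d|V(H)|$, which for a nonempty graph forces $e(H)>0$. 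If the process emptied the graph, the last nonempty stage would be a single vertex with $0$ edges, violating $e\ge8d\cdot1>0$. Hence the process ends at a nonempty subgraph $H_1$ with $\delta(H_1)>8d$ and $e(H_1)\ge8d|V(H_1)|$, that is, $\overline d(H_1)\ge16d$. Since $H_1$ is obtained by vertex deletions, it is an induced subgraph of $H_0$.

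Next we pick a component. The average degree of $H_1$ is a weighted average of the average degrees of its components, weighted by vertex counts. So some component $J$ has $\overline d(J)\ge\overline d(H_1)\ge16d$. As a component of an induced subgraph, $J$ is an induced subgraph of $H_0$, and it is connected. Every vertex of $J$ keeps its full $H_1$-degree, so $\delta(J)>8d$.

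The only delicate point is ensuring that the deletion process does not exhaust the graph. The invariant $e(H)\ge8d|V(H)|$ handles this, since it is incompatible with a nonempty edgeless graph. Everything else is routine.
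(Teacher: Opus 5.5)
Your proof is correct and is essentially the paper's argument. The paper takes a vertex-minimal $W$ with $\overline d(H_0[W])\ge16d$ and notes that minimality forces both connectivity and $\delta(J)>8d$. That rests on the same two facts you use: deleting a vertex of degree at most $8d$ preserves $e\ge8d|V|$, and some component has average degree at least that of the whole graph. You unfold that extremal choice into an explicit peeling process followed by a choice of component.
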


\begin{proof}
Choose a nonempty set $W\subseteq V(H_0)$ of minimum size such that
$J=H_0[W]$ has average degree at least $16d$. The minimality of $W$ implies that $J$ is connected and
$\delta(J)>8d$.
\end{proof}

Fix an arbitrary vertex $v\in V(J)$. Root a BFS tree
$T$ of $J$ at $v$, and let $L_0,L_1,\ldots$ be its levels.

\begin{claim}\label{claim:two-level-cycle}
For some integer
$
   1\le i<1+ \log_2|V(J)|,
$
there is a connected subgraph
$
   F\subseteq J[L_i\cup L_{i+1}]
$
with $\delta(F)\ge d$ which contains an even cycle $C$ of length at least
$n+2$.
\end{claim}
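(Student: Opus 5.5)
Apply Lemma~\ref{lem:bfs-localisation} to the graph $J$ from Claim~\ref{claim:core} and the BFS tree $T$. Its hypotheses hold: $d=\lceil10^3\log k\rceil\ge2$; $J$ is connected and bipartite, being a subgraph of the bipartite host $\Gamma$; and $\delta(J)>8d$. The lemma gives integers $1\le i<q<1+\log_2|V(J)|$ and a connected subgraph $F\subseteq J[L_i\cup L_{i+1}]$ with $\delta(F)\ge d$. This already yields the required range for $i$. It remains to find in $F$ a cycle of length at least $n+2$, which is automatically even because $F\subseteq\Gamma$ is bipartite.

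For the long cycle, I would combine the local expansion in Lemma~\ref{lem:host}\ref{item:localexp} with Lemma~\ref{lem:dfs-cycle}. Since $F\subseteq\Gamma$ and $\delta(F)\ge d$, condition~\ref{item:localexp} gives $|N_F(X)|>2|X|$ for every nonempty $X\subseteq V(F)$ with $|X|\le n$. Take $a=n$ and $t=n+1$. For any $X$ with $n/2\le|X|\le n$ we get $|N_F(X)|>2|X|\ge n$, so $|N_F(X)|\ge n+1=t$. Lemma~\ref{lem:dfs-cycle} then gives a cycle of length at least $n+2$, as required.

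That lemma also needs $|V(F)|>a=n$. I would get this from expansion as well. Suppose instead that $|V(F)|\le n$. Applying condition~\ref{item:localexp} with $X=V(F)$ gives $|N_F(V(F))|>2|V(F)|>0$. But $N_F(V(F))=\emptyset$ by definition, a contradiction. Alternatively, take a single vertex $X=\{x\}$ and note $|V(F)|\ge d+1$; then iterate expansion on sets of size up to $n$, each step at least tripling the set, until the set exceeds $n$ vertices. Either argument works, and the first is immediate. Hence $|V(F)|>n$.

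The main obstacle is bookkeeping rather than any substantive step. One point is that Lemma~\ref{lem:dfs-cycle} allows $a/2$ to be non-integer, which is harmless here. The other is confirming that the expansion in condition~\ref{item:localexp} applies to $F$ viewed inside $\Gamma$. It does, because condition~\ref{item:localexp} is stated for arbitrary subgraphs $F\subseteq\Gamma$ with $\delta(F)\ge d$, and $F\subseteq J\subseteq H_0\subseteq\Gamma$.
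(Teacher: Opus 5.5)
Your proposal is correct and matches the paper's proof step for step. You use Lemma~\ref{lem:bfs-localisation} to obtain $i$ and $F$, then condition~\ref{item:localexp} with $X=V(F)$ to force $|V(F)|>n$, and Lemma~\ref{lem:dfs-cycle} with $a=n$, $t=n+1$ to get the long cycle, which is even because $F$ is bipartite.
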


\begin{proof}
By Lemma~\ref{lem:bfs-localisation}, there exists an integer $i$ such
that
$
   1\le i<1+\log_2|V(J)|
$
and a connected subgraph
$
   F\subseteq J[L_i\cup L_{i+1}]
$
with $\delta(F)\ge d$.

By condition~\ref{item:localexp}, we have $|N_F(X)|>2|X|$ for every
nonempty $X\subseteq V(F)$ with $|X|\le n$. In particular,
$|V(F)|>n$, since otherwise taking $X=V(F)$ gives
$N_F(X)=\varnothing$.
If $n/2\le|X|\le n$, then $|N_F(X)|>2|X|\ge n$, and hence
$|N_F(X)|\ge n+1$. Applying Lemma~\ref{lem:dfs-cycle} to $F$ with $a=n$
and $t=n+1$, we obtain a cycle $C$ with $|C|\ge n+2$. Since $F$ is
bipartite, $C$ is even.
\end{proof}

Fix $i$, $F$, and $C$ as in Claim~\ref{claim:two-level-cycle}. Apply
Lemma~\ref{lem:bfs-adjuster} to $J$, with the subgraph $F$, the cycle $C$,
and the BFS tree $T$. Thus, for some integer $1\le r\le i$, the graph $J$
contains cycles of every even length from $2r+2$ to $2r+|C|-2$. Claim~\ref{claim:two-level-cycle} gives
\[
\begin{aligned}
   2r+2
   &\le2i+2
   <4+2\log_2|V(J)|
   \\&\le4+2\log_2(2M)\\
   &=4+2\log_2(2\cdot10^5kn)
   <n,
\end{aligned}
\]
where the last inequality follows from a straightforward calculation using
$n\ge100\log k$. On the other hand,
$
   2r+|C|-2\ge2r+n>n.
$ Since $n$ is even, it follows that
$n\in\{2r+2,2r+4,\ldots,2r+|C|-2\}$. Hence $J\subseteq H_0$ contains a
copy of $C_n$.

Finally, condition~\ref{item:host-edges} gives
$e(\Gamma)\le2pM^2=2\cdot10^{10}k^2(\log k)n$. Therefore
$\rhat_k(C_n)\le2\cdot10^{10}k^2(\log k)n$, completing the proof.
\qed

\section{Concluding remarks}\label{sec:conclusion}

We conclude with several related problems and directions. 
For a graph $H$, the $k$-colour
induced size--Ramsey number $\widehat R_{\mathrm{ind},k}(H)$ is the minimum
number of edges in a graph $G$ such that every $k$-edge-colouring of $G$
contains a monochromatic copy of $H$ which is induced in $G$.
Brada\v{c}, Dragani\'c and Sudakov~\cite{BDS} proved that for all sufficiently large odd $n$,
$\widehat R_{\mathrm{ind},k}(C_n)=e^{O(k\log k)}n$.
In view of the known lower bound $e^{\Omega(k)}n$ for odd cycles, they
conjectured that the correct dependence in the odd case is
$e^{\Theta(k)}n$.
In a draft currently in preparation, we resolve this conjecture.

For even cycles, we believe that the induced problem should have the same
order of magnitude as the ordinary one. 
\begin{conj}\label{conj:induced-even-cycle}
For every $k\ge2$ and every sufficiently large even integer $n$, we have
$$
    \widehat R_{\mathrm{ind},k}(C_n)
    =\Theta(k^2\log k)n.
$$
\end{conj}

The main difficulty in the induced setting is that a monochromatic cycle may
have chords in the host graph, so the argument used above no longer
applies directly. As a first step towards this conjecture, we believe that one possible approach is to use a somewhat
denser random host, with about $k^{5+o(1)}n$ edges, which
simultaneously has large girth and a monochromatic core with robust expansion.
One could then recursively construct a Liu--Montgomery
adjuster~\cite{LiuMontgomery}, choosing each new piece outside the
neighbourhood of the previously constructed part so that inducedness is
preserved. Finally, one joins the two ends by a long connecting path and uses the
adjuster to vary the resulting cycle length until it is exactly $n$.

\smallskip
It would also be interesting to understand whether the dependence on $k$
obtained in this paper extends beyond cycles. Javadi, Kohayakawa and
Miralaei~\cite{JKM} obtained polynomial bounds in $k$ for long even
subdivisions of bounded-degree graphs.

\begin{conj}
For every fixed graph $H$, the size--Ramsey number of sufficiently long
even subdivisions of $H$ has the form $\Theta_H(k^2\log k)\,n$, where $n$
is the number of vertices of the subdivision.
\end{conj}

\section*{Acknowledgements}
Xiaolin Wang was supported by the National Key R\&D Program of China under
grant number 2023YFA1010202 and the National Natural Science Foundation of
China under grant number 12401447.  Lanchao Wang was supported by the NSFC
under grant number 12471327, the National Key R\&D Program of China under
grant number 2024YFA1013900, the China Scholarship Council, and the
Institute for Basic Science (IBS-R029-C4). 

At the beginning of this project, we asked AI tools to work on the problem directly. The initial attempts mainly consisted of trying to optimize methods from previous work and did not lead to the sharp bound.
 The authors subsequently suggested combining random-graph expansion for finding cycles with BFS-based methods for adjusting cycle lengths. With this strategy in place, the AI tools quickly produced an argument yielding the sharp bound. The authors then substantially revised the argument. All mathematical arguments and proofs in the final manuscript were written and verified by the authors.

\bibliographystyle{abbrv}
\bibliography{reference}
\end{document}